\documentclass[reqno,12pt]{amsart}
\usepackage{amscd}

\usepackage{mathrsfs}
\usepackage{indentfirst,latexsym,bm,amsmath,amssymb,amsthm,amscd}
\usepackage{diagrams}
\usepackage[all]{xy}
\newtheorem{theorem}{Theorem}[section]
\newtheorem{lemma}[theorem]{Lemma}

\theoremstyle{definition}

\newtheorem{remark}[theorem]{Remark}

\numberwithin{equation}{section}


\begin{document}

\title[canonical maps of nonsingular threefolds]
{On the canonical maps of nonsingular threefolds of general type}

\author[Rong Du]{Rong Du$^{\dag}$}
\address{Department of Mathematics\\
Shanghai Key Laboratory of PMMP\\
East China Normal University\\
Rm. 312, Math. Bldg, No. 500, Dongchuan Road\\
Shanghai, 200241, P. R. China} \email{rdu@math.ecnu.edu.cn}

\thanks{$^{\dag}$ The Research Sponsored by the National Natural Science Foundation of China (Grant No. 11471116, 11531007) and Science and Technology Commission of Shanghai Municipality (Grant No. 13dz2260400).}

 \maketitle

\begin{center}
{\small{Dedicate to Professor Ngaiming Mok on the occasion of his
$60^{\text{th}}$ Birthday.}}
\end{center}

 \begin{abstract}{Let $S$ be a nonsingular minimal complex projective surface of general type and the canonical map of $S$ is generically finite. Beauville showed that the geometric genus of the image of the canonical map is vanishing or equals the geometric genus of $S$ and discussed the canonical degrees for these two cases. We generalize his results to nonsingular minimal complex projective threefolds. }\end{abstract}
\vspace{1cm}
\textbf{AMS subject classifications:} 14J30, 14E20.
\vspace{1cm}

\textbf{Key words:} projective threefold $\cdot$ general type $\cdot$ canonical map $\cdot$
canonical degrees

\section{Introduction}\label{secintro}
The study of the canonical maps of $n$ dimensional projective varieties of general type is one of the central problems in algebraic geometry. For $n=2$, Beauville (\cite{Bea}) proved that the geometric genus of the image $\Sigma$ of the canonical map of a nonsingular complex projective surface $S$ of general type is vanishing or equals the geometric genus of $S$.  Moreover, he showed that the canonical degree is less than or equal to $36$ if $p_g(\Sigma)=0$ and $9$ if $p_g(\Sigma)=p_g(S)$.

For$n=3$, M. Chen studied the canonical map of fiber type (\cite{Ch1}, \cite{C-H}) and posted an open problem in \cite{Ch1} as follows: Let $X$ be a Gorenstein minimal projective $3$-fold with at worst locally factorial terminal singularities. Suppose that the canonical map is generically finite onto its image. Is the generic degree of the canonical map universally upper bounded? Hacon gave a positive answer to Chen's problem. More precisely, he showed that the canonical degree is at most $576$ and the statement is wrong without the Gorenstein condition. Recently, Y. Gao and the author improved Hacon's upper bound to $360$ and the equality holds if and only if $p_g(X)=4$, $q(X)=2$, $\chi(\omega_X)=5$, $K_X^3=360$ and $|K_X|$ is base point free (\cite{D-G1}).

For $n\ge 4$, the situation is totally different. The reason is that, in case of $n<4$, Miyaoka-Yau inequality play a vital role in the proof while Miyaoka-Yau inequality is not effective enough to control $K_X^n$ for $n\ge 4$. In \cite{D-G3}, the authors consider abellian canonical $n$-folds (see \cite{D-G2} for $n=2$ and \cite{D-G1} for $n=3$) such that the canonical degrees are universally upper bounded.

First, we list the following theorem due to  Beauville.

\begin{theorem}\emph{(\cite{Bea})}\label{maint}
Let $X$ be a nonsingular minimal complex projective threefold of
general type. Suppose that the canonical map $\phi_X:
X\dashrightarrow \mathbb{P}^{p_g(X)-1}$ is generically finite. Denote $\Sigma:=\phi_X(X)$, then either:
\begin{itemize}
\item[(1)] $p_g(\Sigma) = 0$, or\\
\item[(2)] $p_g(\Sigma) = p_g(X)$, where the canonical map of $\Sigma$ is of degree $1$.
\end{itemize}
\end{theorem}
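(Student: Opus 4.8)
The plan is to adapt Beauville's surface argument, paying the extra attention that dimension three requires. First I would resolve the canonical map: choose a birational morphism $f\colon X'\to X$ with $X'$ smooth such that the movable part $M$ of $|K_{X'}|$ is base point free; it then defines a morphism $\tilde\phi\colon X'\to\Sigma\subseteq\mathbb P^{p_g(X)-1}$ with $M=\tilde\phi^{*}\mathcal O(1)$, and $K_{X'}=M+Z$ with $Z\geq 0$ the fixed part. Since $K_{X'}-f^{*}K_X$ is effective and $f$-exceptional we have $H^{0}(X',K_{X'})=H^{0}(X,K_X)$; since $M$ is the movable part, $H^{0}(X',M)=H^{0}(X',K_{X'})$; and since $\Sigma$ is non-degenerate, $\tilde\phi^{*}$ identifies $H^{0}(\mathbb P^{p_g(X)-1},\mathcal O(1))$ with $H^{0}(X',M)$. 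Then I would resolve $\Sigma$ by $\rho\colon\Sigma'\to\Sigma$ and, after further blowing up $X'$, factor $\tilde\phi=\rho\circ\psi$ with $\psi\colon X'\to\Sigma'$ generically finite of degree $d=\deg\phi_X$.

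If $p_g(\Sigma)=0$ we are in case (1); so assume $p_g(\Sigma)\geq 1$. A holomorphic $3$-form on $\Sigma'$ pulls back under the generically finite morphism $\psi$ to a holomorphic $3$-form on $X'$, so $\psi^{*}$ injects $H^{0}(\Sigma',\omega_{\Sigma'})$ into $H^{0}(X',\omega_{X'})=H^{0}(X,K_X)$, whence $p_g(\Sigma)\leq p_g(X)$. Let $W$ be the image of this injection and $\overline W\subseteq H^{0}(\mathbb P^{p_g(X)-1},\mathcal O(1))$ the corresponding subspace; its inclusion determines a linear projection $\pi\colon\mathbb P^{p_g(X)-1}\dashrightarrow\mathbb P^{p_g(\Sigma)-1}$ with centre of dimension $p_g(X)-p_g(\Sigma)-1$. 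Because the sections of $W$ are pulled back from $\Sigma'$, the rational map they define on $X'$ equals both $\phi_{K_{\Sigma'}}\circ\psi$ and $\pi\circ\tilde\phi=\pi\circ\rho\circ\psi$; cancelling the dominant $\psi$ yields $\phi_{K_{\Sigma'}}=\pi|_{\Sigma}\circ\rho$, i.e.\ the canonical map of $\Sigma'$ is the resolution $\rho$ followed by a linear projection of the non-degenerate $\Sigma$. Hence, if $p_g(\Sigma)=p_g(X)$, then $\pi$ is the identity and $\phi_{K_{\Sigma'}}$ is $\rho$ composed with $\Sigma\hookrightarrow\mathbb P^{p_g(X)-1}$, which is generically one-to-one onto $\Sigma$; this is case (2).

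It remains to rule out $0<p_g(\Sigma)<p_g(X)$, which is the main obstacle. Put $L:=\rho^{*}\mathcal O_{\Sigma}(1)$, so that $M=\psi^{*}L$; comparing $K_{X'}=\psi^{*}K_{\Sigma'}+R_{\psi}$ (with $R_{\psi}\geq 0$ the ramification divisor of $\psi$) with $K_{X'}=\psi^{*}L+Z$ gives the key identity $\psi^{*}(K_{\Sigma'}-L)=Z-R_{\psi}$, and since pull-back under a generically finite morphism detects effectivity of divisors, the theorem reduces to showing that $Z-R_{\psi}\geq 0$ once $p_g(\Sigma)\geq 1$ — equivalently that $K_{\Sigma'}-L$ is effective, for then $p_g(\Sigma)=h^{0}(K_{\Sigma'})\geq h^{0}(L)=p_g(X)$ and we land in case (2). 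Concretely one has to show that whenever $\Sigma'$ carries a nonzero holomorphic $3$-form, every holomorphic $3$-form of $X'$ vanishes along the ramification of $\psi$, i.e.\ no "new" canonical forms beyond those pulled back from $\Sigma'$ can occur. Following Beauville I would attack this by: replacing $\Sigma$ by a birational model with terminal (or at least canonical) singularities, so that the discrepancies of $\rho$ and the base locus of $|K_{\Sigma'}|$ are under control; disposing of the case where $\Sigma'$ is not of general type by the classification of threefolds of Kodaira dimension $\leq 2$ together with the birational structure of their pluricanonical maps (which should force $p_g(\Sigma)=0$); and handling the case where $\Sigma'$ is of general type by a descending induction on $p_g$, fed by the structural description of the canonical map obtained above. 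This last analysis — controlling base loci and discrepancies in dimension three, where one must invoke Miyaoka--Yau and the minimal model programme — is where the threefold case genuinely departs from Beauville's surface argument, and I expect it to be the hardest part.
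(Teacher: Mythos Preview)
Your setup and reduction match the paper's: after resolving the canonical map and factoring it through a smooth model $\Sigma'$ of $\Sigma$ with $L=\rho^{*}\mathcal O_{\Sigma}(1)$, everything comes down to showing that if $p_g(\Sigma')>0$ then $K_{\Sigma'}-L$ is linearly equivalent to an effective divisor, for then $p_g(\Sigma')=h^{0}(K_{\Sigma'})\ge h^{0}(L)=p_g(X)$, and together with the reverse inequality (from pulling back forms) this forces equality and case~(2). The gap is in how you propose to carry out this last step. You suggest passing to terminal models, invoking the classification in Kodaira dimension $\le 2$, running a descending induction on $p_g$, and bringing in Miyaoka--Yau and the MMP, calling this ``the hardest part''. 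In fact it is a short elementary computation, and the paper (following Beauville verbatim) does it without any of that machinery; your proposed route is both unnecessary and, as sketched, not clearly workable.

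Here is the argument you are missing. Choose any nonzero $\omega\in H^{0}(\Sigma',K_{\Sigma'})$. Its pullback $\psi^{*}\omega$ is a nonzero global $3$-form on $X'$, so $\operatorname{div}(\psi^{*}\omega)\in|K_{X'}|=\psi^{*}|L|+Z$; hence $\operatorname{div}(\psi^{*}\omega)=\psi^{*}Q+Z$ for some $Q\in|L|$. By Hurwitz, $\operatorname{div}(\psi^{*}\omega)=\psi^{*}\operatorname{div}(\omega)+R_{\psi}+(\psi\text{-exceptional terms})$. Equating and comparing the coefficient along any horizontal prime $\Gamma_t\subset X'$ lying over a prime $\Gamma\subset\Sigma'$ with ramification index $e$ gives
\[
e\cdot\operatorname{mult}_{\Gamma}(Q)+\operatorname{mult}_{\Gamma_t}(Z)\;=\;e\cdot\operatorname{mult}_{\Gamma}(\operatorname{div}\omega)+(e-1),
\]
and since $\operatorname{mult}_{\Gamma_t}(Z)\ge 0$ and $e-1<e$, integrality forces $\operatorname{mult}_{\Gamma}(Q)\le\operatorname{mult}_{\Gamma}(\operatorname{div}\omega)$. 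Thus $\operatorname{div}(\omega)\ge Q$, i.e.\ $K_{\Sigma'}-L$ is effective, and you are done. Note also that your rephrasing of the goal as ``every holomorphic $3$-form of $X'$ vanishes along the ramification of $\psi$'' is not the right target: that would be the \emph{conclusion} $p_g(X)=p_g(\Sigma)$ itself, not a route to it. What one actually proves is that a \emph{single} nonzero $\omega$ on $\Sigma'$ already dominates some hyperplane section $Q\in|L|$.
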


In \cite{Sup1}, P. Supino produced a bound for the admissible degree of the canonical map of a
threefold of general type $X$ with the condition $h^2(\mathscr{O}_X)-h^1(\mathscr{O}_X)\ge 2$.
In this paper, we generalize Beauville's other results to minimal nonsingular complex projective threefolds without the above condition.

\begin{theorem}\label{maint2}
Let $X$ be a nonsingular minimal complex projective threefold of
general type. Suppose that the canonical map $\phi_X: X\dashrightarrow \mathbb{P}^{p_g(X)-1}$ is generically finite. Denote $\Sigma:=\phi_X(X)$ and $d:=deg\ \phi_X$.
\begin{itemize}
\item[(1)] If $p_g(\Sigma) = 0$ and $K_M$ is nef, where $M$ is a nonsingular model of $\Sigma$, then $d\le 108$.\\
\item[(2)] If $p_g(\Sigma) = p_g(X)$, where the canonical map of $\Sigma$ is of degree $1$, then $d\le 86$. Moreover, if $p_g(X)\ge 111$, then $d\le 32$.
\end{itemize}
\end{theorem}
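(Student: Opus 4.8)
The plan is to transpose Beauville's surface argument to dimension three, replacing the Bogomolov--Miyaoka--Yau inequality $K_S^{2}\le 9\chi(\mathcal O_S)$ by its threefold counterpart, Miyaoka's inequality $K_X^{3}\le 72\,\chi(\omega_X)$ for minimal threefolds, and then bounding $\chi(\omega_X)$ — which, unlike $\chi(\mathcal O_S)$ for surfaces, is not a priori dominated by the geometric genus. Controlling $\chi(\omega_X)$ is precisely where the two extra hypotheses ($K_M$ nef, resp. $p_g(\Sigma)=p_g(X)$) are spent.

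\emph{Set-up and lower bound.} Resolve $\phi_X$: choose a composite of blow-ups with nonsingular centres $\mu\colon\widetilde X\to X$ and a morphism $f\colon\widetilde X\to M$ onto the nonsingular model $M$ of $\Sigma$, so that the canonical image of $X$ is traced by a subsystem of $|f^{*}H|$ for a nef and big divisor $H$ on $M$; write $\mu^{*}|K_X|=|L|+Z$ for the decomposition into moving and fixed parts, with $L=f^{*}H$. Since $\phi_X$ is generically finite, $\dim\Sigma=3$, the map $f$ is generically finite of degree $d$, $h^{0}(M,H)\ge p_g(X)$, $L^{3}=d\deg\Sigma$, and nondegeneracy of $\Sigma\subset\mathbb P^{p_g(X)-1}$ forces $\deg\Sigma\ge p_g(X)-3$; in particular $p_g(X)\ge 4$. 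In case (1) take $M$ with $K_M$ nef (legitimate after further blow-ups of $\widetilde X$, by the hypothesis on $\Sigma$); in case (2) $M$ is of general type with birational canonical map, and $H$ may be taken to be the base-point-free moving part of $|K_M|$. Because $\mu^{*}K_X$ and $L$ are nef and $Z\ge 0$, peeling $L$ off $\mu^{*}K_X=L+Z$ three times gives
$$K_X^{3}=(\mu^{*}K_X)^{3}\ \ge\ (\mu^{*}K_X)^{2}\!\cdot\!L\ \ge\ (\mu^{*}K_X)\!\cdot\!L^{2}\ \ge\ L^{3}=d\deg\Sigma\ \ge\ d\,\bigl(p_g(X)-3\bigr).$$

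\emph{Upper bound — the crux.} Miyaoka's inequality on the minimal threefold $X$ gives $K_X^{3}\le 72\,\chi(\omega_X)$, hence $d\le 72\,\chi(\omega_X)/\deg\Sigma$, and everything reduces to bounding $\chi(\omega_X)=p_g(X)+q(X)-1-h^{2}(\mathcal O_X)$ linearly in $\deg\Sigma$, i.e. to taming $q(X)-h^{2}(\mathcal O_X)$. This will be done by restricting to a general surface $S$ in the (base-point-free) moving system: its canonical-type linear system is generically finite onto a hyperplane section of $\Sigma$, Beauville's surface results apply, and one feeds them back through $0\to\mathcal O_{\widetilde X}\to\mathcal O_{\widetilde X}(S)\to\mathcal O_S(S)\to 0$ and Riemann--Roch. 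Here the hypothesis on $M$ enters: in case (1), $K_M$ nef gives $\chi(\omega_M)=\tfrac1{24}K_M\!\cdot\!c_2(M)\ge 0$ (Miyaoka on $M$), and together with $p_g(M)=p_g(\Sigma)=0$ this leads to $\chi(\omega_X)\le\tfrac32\deg\Sigma$, whence $d\le 108$; in case (2), birationality of the canonical map of $M$ produces in addition a Castelnuovo-type lower bound $\deg\Sigma\ge c\,p_g(X)-c'$ growing linearly in $p_g(X)$, and a bound $\chi(\omega_X)\le\alpha\deg\Sigma+\beta$, which combine to give $d\le 86$ in general and $d\le 32$ once $p_g(X)\ge 111$, the threshold being exactly where the growth of $\deg\Sigma$ pushes $72\,\chi(\omega_X)/\deg\Sigma$ below $32$.

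\emph{Expected main obstacle.} The conceptual content is entirely in the last step: obtaining the linear-in-$\deg\Sigma$ bound for $\chi(\omega_X)$. For surfaces this is automatic from $\chi(\mathcal O_S)\ge 1$ (so $\chi(\mathcal O_S)\le p_g(S)+1$); for threefolds it genuinely fails in general, and one must exploit both the behaviour of the invariants under the degree-$d$ morphism $\widetilde X\to M$ and under restriction to $S$, together with the hypothesis on $M$. Pinning down the precise constants $108$, $86$, $32$ and the bound $p_g(X)\ge 111$ then amounts to a careful, but essentially routine, book-keeping of these estimates.
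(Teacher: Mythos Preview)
Your skeleton is right --- $d\,\deg\Sigma\le K_X^{3}\le 72\,\chi(\omega_X)$ is exactly the inequality chain --- but both of the two remaining ingredients are misplaced, and as sketched the argument does not close.

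\textbf{Where $K_M$ nef is actually spent.} In the paper the hypothesis $K_M$ nef is \emph{not} used to control $\chi(\omega_X)$ via $\chi(\omega_M)\ge 0$. It is used only to improve the trivial degree bound $\deg\Sigma\ge p_g(X)-3$ to $\deg\Sigma\ge 2p_g(X)-6$: one cuts $\Sigma$ (resolved on $M$) down to a curve $C$ with $C^{2}=\deg\Sigma$ and $h^{0}(\mathcal O_C(C))\ge p_g(X)-1$; Riemann--Roch together with $2g(C)-2\ge 2C^{2}$ (this is where $K_M$ nef enters, through the adjunction chain $K_C=(K_M+2H)|_C$) rules out the non-special case, and Clifford's theorem gives $\deg\Sigma\ge 2(p_g(X)-1)-4$. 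In case~(2) the analogous input is Chen's Castelnuovo-type inequality $\deg\Sigma\ge 3p_g(X)-10$ for canonically embedded threefolds. You only invoke $\deg\Sigma\ge p_g(X)-3$; with that bound alone, even the optimal $\chi$-estimate below gives $d\le 216$ at $p_g=5$, not $108$.

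\textbf{How $\chi(\omega_X)$ is actually bounded.} The paper does not restrict to a surface section or use Beauville's surface theorem; it bounds $\chi(\omega_X)$ directly in terms of $p_g(X)$ via the Albanese map. If $q(X)\le 2$ then trivially $\chi(\omega_X)\le p_g(X)+1$. If $q(X)\ge 3$, Stein-factor the Albanese map as $X\to Y\to\mathrm{Alb}(X)$ and apply Hacon's argument: when $\dim Y\ge 2$ one has $\chi(\omega_X)\le p_g(X)$, and when $\dim Y=1$ one has $\chi(\omega_X)\le p_g(X)+\chi(\omega_Y)$ with $\chi(\omega_Y)\,p_g(F)\le p_g(X)$ and $p_g(F)\ge 3$. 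In every case $\chi(\omega_X)\le \tfrac{4}{3}p_g(X)$ (indeed $\le p_g(X)+1$ except in the curve-base case). Plugging this into $d\le 72\,\chi(\omega_X)/\deg\Sigma$ with the degree bounds above gives $d\le 36\,\dfrac{p_g(X)+1}{p_g(X)-3}\le 108$ in case~(1) and $d\le 72\,\dfrac{p_g(X)+1}{3p_g(X)-10}\le 86$ in case~(2), the latter dropping to $32$ once $p_g(X)\ge 111$.

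\textbf{The gap in your sketch.} Your proposed route --- restrict to a general $S\in|L|$, apply Beauville to $S$, and feed $\chi(\omega_M)\ge 0$ back through the structure sequence --- is not carried out, and I do not see how it yields $\chi(\omega_X)\le\tfrac{3}{2}\deg\Sigma$. The invariants of $S$ are governed by $K_X$ and $L$, not by $\chi(\omega_M)$, and the exact sequence you cite relates $h^{i}(\mathcal O_{\widetilde X}(S))$ to $h^{i}(\mathcal O_S(S))$, not $\chi(\omega_X)$ to $\deg\Sigma$. Unless you can make that step explicit, the crux is missing; the paper's Albanese/Hacon bound on $\chi(\omega_X)$ together with the sharpened degree bounds is what actually produces the constants $108$, $86$, $32$ and the threshold $111$.
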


\begin{remark}
P. Supino also constructed some examples of threefolds of general type with canonical degree $3$ (\cite{Sup2}) and  $4$, $5$, $6$ with G. Casnati (\cite{C-S}). later, Cai (\cite{Cai}) constructed some examples of threefolds with canonical degrees $32$, $64$ and $72$. But his example of degree $72$ depends on the existence of the surface of general type with canonical degree $36$, which was first discovered by Yeung (cf. \cite{Yeung}, \cite{L-Y}). Each example is a threefold which can be decomposed as Cartesian product of a surface and a curve. In \cite{D-G1}, Gao and the author using abelian cover to construct examples with canonical degree $2$, $4$, $8$, $16$, $32$.
\end{remark}

%

\section{canonical map of threefolds of general type}
Let $S$ be a nonsingular minimal surface of general type with geometric genus $p_g(S)\ge 3$.
Denote by $\phi_S: S\dashrightarrow \mathbb{P}^{p_g(S)-1}$ the canonical map and let $d:= \text{deg}(\phi_S)$. The following
Beauville's result is well-known.

\begin{theorem}\emph{(\cite{Bea})}\label{B}
If the canonical image $F:=\phi_S(S)$ is a surface, then either:
\begin{itemize}
\item[(1)] $p_g(F) = 0$, or\\
\item[(2)] $p_g(F) = p_g(S)$, where $F$ is a canonical surface.
\end{itemize}
Moreover, in case \emph{(1)},
\begin{itemize}
\item[(i)] $d\le 36$ and $d=36$ if and only if $p_g(S)=3$, $q(S)=0$, $K_S^2=36$ and $|K_S|$ base point free,\\
\item[(ii)]$d\le 9$ if $\chi(\mathscr{O})\ge 31$,\\
\item[(iii)] $d\le 11$ if $F$ is not a ruled surface;
\end{itemize}
and in case \emph{(2)},
\begin{itemize}
\item[(i)] $d\le 9$ and $d=9$ if and only if $p_g(S)=4$, $q(S)=0$, $K_S^2=45$, $|K_S|$ base point free and $F$ is a surface of degree $5$ in $\mathbb{P}^3$,\\
\item[(ii)]$d\le 3$ if $\chi(\mathscr{O})\ge 14$.
\end{itemize}
\end{theorem}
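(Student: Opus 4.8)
The plan is to resolve the canonical map and then run a single numerical estimate, specialized to each case of the dichotomy. First I would choose a birational morphism $\pi\colon S'\to S$ from a nonsingular surface on which $\phi_S$ becomes a morphism, and factor it as $S'\xrightarrow{g}F'\xrightarrow{h}\mathbb{P}^{p_g-1}$, where $F'$ is a nonsingular model of $F$, $g$ is generically finite of degree $d$, and $h$ is the map attached to $H':=h^*\mathcal{O}(1)$. Writing $\pi^*K_S=M+Z$ with moving part $M=g^*H'$ and fixed part $Z\ge0$, two facts drive everything. Since $F$ is a nondegenerate surface in $\mathbb{P}^{p_g-1}$ one has $\deg F=(H')^2\ge p_g-2$, and since $S$ is minimal $\pi^*K_S$ is nef, whence $K_S^2=(\pi^*K_S)^2\ge(\pi^*K_S)\cdot M\ge M^2=d\,\deg F$. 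Thus the inequality $d\,\deg F\le K_S^2$, combined with a lower bound for $\deg F$ and with the Bogomolov--Miyaoka--Yau bound $K_S^2\le 9\chi(\mathcal{O}_S)=9(1-q+p_g)$, will yield every numerical estimate; I would treat the equality cases by tracking when these inequalities are simultaneously sharp.

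For the dichotomy I would use the ramification formula $K_{S'}=g^*K_{F'}+R$ with $R\ge0$: pulling back canonical forms and multiplying by the ramification section embeds $H^0(F',K_{F'})$ into $H^0(S',K_{S'})$, so $p_g(F')\le p_g(S)$. The crux is to show that $p_g(F')>0$ forces equality. Here I would exploit that $|K_{S'}|$ itself factors through $g$, its moving part being $M=g^*H'$, so the subsystem $g^*|K_{F'}|\subseteq|K_{S'}|$ is the pulled-back canonical system of $F'$; comparing it with the hyperplane system $|H'|$ on $F'$ and using that $S$ is of general type shows that a nonzero canonical system of $F'$ must already span, i.e. $h^0(K_{F'})=h^0(H')=p_g$ and $H'$ is birationally the canonical class of $F'$, so $F$ is a canonical surface. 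I expect this ``nonzero implies full'' step to be the most delicate point of the whole argument.

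In case $p_g(F)=0$ I would feed $\deg F\ge p_g-2$ into $d\,\deg F\le K_S^2\le 9(1-q+p_g)$ to obtain
\[
 d\le \frac{9(1-q+p_g)}{p_g-2}=9+\frac{9(3-q)}{p_g-2}\le 9+\frac{27}{p_g-2}.
\]
The right-hand side is largest at $p_g=3,\ q=0$, giving $d\le 36$; equality forces $\chi(\mathcal{O}_S)=4$, $K_S^2=9\chi=36$ (so BMY is sharp), $\deg F=1$ and $Z=0$, that is $p_g=3$, $q=0$, $K_S^2=36$ and $|K_S|$ base point free. Rewriting $p_g-2=\chi-3+q\ge\chi-3$ gives $d\le 9\chi/(\chi-3)<10$ as soon as $\chi\ge31$, hence (ii). For (iii) I would replace $\deg F\ge p_g-2$ by the stronger bound available when $F$ is not ruled: surfaces of minimal or near-minimal degree in $\mathbb{P}^{p_g-1}$ are scrolls, Veroneses, or cones, all rational or ruled, so a non-ruled nondegenerate $F$ has appreciably larger degree relative to $p_g$, and the same division sharpens the bound to $d\le 11$.

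In case $p_g(F)=p_g(S)$ the surface $F$ is canonical, so Castelnuovo's bound for the degree of a birationally canonically embedded surface gives $\deg F\ge 3p_g-7$; moreover $p_g\ge4$, since for $p_g=3$ the image would lie in $\mathbb{P}^2$ and have $p_g=0$. Then
\[
 d\le\frac{9(1-q+p_g)}{3p_g-7}\le\frac{9(1+p_g)}{3p_g-7},
\]
which decreases in $p_g$ and equals $9$ at $p_g=4$, giving $d\le9$; equality forces $p_g=4$, $q=0$, $K_S^2=45$, $\deg F=5$ and $|K_S|$ base point free, so $F$ is a quintic in $\mathbb{P}^3$. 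Writing $3p_g-7\ge 3\chi-10$ yields $d\le 9\chi/(3\chi-10)<4$ once $\chi\ge14$, hence $d\le3$, proving (ii). Beyond the dichotomy, the main obstacles I foresee are the sharp degree bounds, namely the minimal-degree classification behind (iii) and Castelnuovo's inequality behind case (2); once these are in place, the extremal characterizations reduce to reading off the equality cases of BMY, of the degree bound, and of $Z=0$.
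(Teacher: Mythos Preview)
The paper does not actually prove Theorem~\ref{B}; it is stated there purely as a citation of Beauville's surface result, with no proof supplied. So there is no ``paper's own proof'' to compare against directly. What the paper does prove are the threefold analogues: Theorem~\ref{maint} (the dichotomy) and Theorem~\ref{maint2} (the degree bounds), together with Lemma~\ref{degimag}. Your proposal is, in spirit, exactly the surface version of those arguments: the resolution/Stein-factorization setup, the coefficient comparison for the ``nonzero implies full'' step, the chain $d\cdot\deg F\le K_S^2\le 9\chi$, and the case analysis according to the available lower bound on $\deg F$. In that sense your outline matches Beauville's original approach, which the paper explicitly says it is mimicking.

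Two places in your sketch deserve tightening. First, your treatment of the dichotomy is too loose: you assert that $g^*|K_{F'}|$ sits inside $|K_{S'}|$ and then that nonzero forces full, but the actual mechanism (visible in the paper's threefold proof) is that from $|K_{\tilde X}|=\pi^*|H|+Z$ and a nonzero $\omega\in H^0(K_Y)$ one shows, by comparing multiplicities through the ramification formula, that $\mathrm{div}(\omega)\ge Q$ for some $Q\in|H|$, whence $p_g(Y)\ge h^0(Q)=p_g(X)$; you should indicate this coefficient-comparison step rather than leave it as ``I expect this to be delicate.'' Second, your argument for case~(1)(iii) is a handwave: saying that non-ruled surfaces have ``appreciably larger degree'' is not a proof. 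The clean route (parallel to the paper's Lemma~\ref{degimag}) is to pass to a smooth model $M$ of $F$ with $K_M$ nef, take a smooth hyperplane curve $C$, and use Clifford's theorem on $\mathcal{O}_C(C)$ to get a doubled degree bound; that is what turns $p_g-2$ into something strong enough to yield $d\le 11$. The rest of your numerics (the $36$, the $\chi\ge31\Rightarrow d\le9$, the $3p_g-7$ bound and the $\chi\ge14\Rightarrow d\le3$) are correct and are precisely the surface counterparts of the paper's threefold computations.
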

\begin{remark}
The existence of a surface of general type with $p_g(S)=3$, $q(S)=0$, $K_S^2=36$ and $|K_S|$ base point free was first proved by Yeung (cf. \cite{Yeung}, \cite{L-Y}).
\end{remark}

The following theorem is due to Beauville. We mimic Beauville's proof to nonsingular minimal complex projective threefolds as follows in order to keep this note self-contained.

Proof of \textbf{ Theorem \ref{maint}}:
\begin{proof}
Let $|K_X|=|S|+F$ such $S$ is a moving part and $F$ is the fixed part of $|K_X|$.
Consider the following diagram
\[\xymatrix{
\tilde{X} \ar[r]^{\pi}\ar[d]_{\sigma_2} &Y\ar[d]^{\varepsilon_2}\\
\bar{X} \ar@{-->}[ur]^{\bar{\phi}}\ar[d]_{\sigma_1}\ar[r]^{f}\ar[dr]& V \ar[d]^{\varepsilon_1}\\
X \ar@{-->}[r] & \Sigma
},\] where $\sigma_1$ is the elimination of the indeterminacy of $\phi_X$, $\sigma_2$ is the elimination of the indeterminacy of $\bar{\phi}$, $V$ is the minimal model of $\Sigma$ and $Y$ is a resolution of $V$. Suppose that $\varepsilon=\varepsilon_1\circ\varepsilon_2$ and $\sigma=\sigma_1\circ\sigma_2$.
Let $|\sigma^*S|=|S'|+\sum_{i=1}^r a_iE_i$ such that $|S'|$ is base point free and
\[\tilde{\phi}=\varepsilon\circ\pi=\phi_{|S'|}: \tilde{X}\rightarrow \Sigma\subseteq \mathbb{P}^{p_g{(X)}-1}.\]
Denote  $\mathscr{O}_{Y}(H)=\varepsilon^*\mathscr{O}_{\Sigma}(1)$, then $S'=\pi^*\circ\varepsilon^*\mathscr{O}_\Sigma(1)=\pi^*(\mathscr{O}_{Y}(H))$.

\begin{equation}
\begin{split}
K_{\tilde{X}}&=\sigma^*K_X+\sum_{i=1}^r b_iE_i\\
&=\sigma^*S+\sigma^*F+\sum_{i=1}^rb_iE_i\\
&=S'+Z,
 \end{split}
  \end{equation}
where $b_i=1$ or $2$, $Z=\sum_{i=1}^r (a_i+b_i)E_i+\sigma^*F$. Then $|K_{\tilde{X}}|=|S'|+Z$.
So
\begin{equation}
\begin{split}
p_g(X)&=h^0(\mathscr{O}_{\tilde{X}}(K_{\tilde{X}}))=h^0(\mathscr{O}_{\tilde{X}}(S'))=h^0(\pi^*\mathscr{O}_{Y}(H))\\
&\ge h^0(\mathscr{O}_{Y}(H))\ge p_g(X).
 \end{split}
  \end{equation}

Therefore $h^0(\mathscr{O}_{\tilde{X}}(S'))=h^0(\mathscr{O}_{Y}(H))=p_g(X)$ and $|S'|=\pi^*|H|$ from which we have $|K_{\tilde{X}}|=\pi^*|H|+Z$.

Suppose that $p_g(Y)=p_g(\Sigma)\neq 0$, then there is a nonzero holomorphic $3$-form $\omega\in H^0{(Y, \mathscr{O}_Y(K_Y))}$.

Next we state that there exists $Q\in |H|$ such that div$(\omega)\ge Q$.
In fact by Hurwitz formula
\[\text{div}(\pi^*(\omega))=\pi^*\text{div}(\omega)+\sum_i (e_i-1)S_i+\sum_{j}r_jE_j,\]
where $S_i$'s are branch loci, $e_i$'s are the ramification indices and $E_j$'s are contracted to points or curves by $\pi$.
Since div$(\pi^*(\omega))\in |K_{\tilde{X}}|=\pi^*|H|+Z$, there exists $Q\in |H|$ such that
\begin{equation}\label{pull Q}
\pi^*Q+Z=\pi^*\text{div}(\omega)+\sum_i(e_i-1)S_i+\sum_j r_jE_j
\end{equation}

Let $Q=\sum h_{\Gamma}\Gamma$ and div$(\omega)=\sum k_{\Gamma}\Gamma$.
Suppose that $\pi^*\Gamma=\sum_t \Gamma_t+\sum r_j'E_j$, where $\Gamma=\pi(\Gamma_t)$.
If $\pi^*\Gamma\nsupseteq S_i$,
since $$\pi^*(Q)=\pi^*(\sum h_\Gamma\Gamma)=\sum h_\Gamma\pi^*\Gamma$$ and
\[\pi^*(\text{div}(\omega))=\sum k_\Gamma\pi^*\Gamma+\sum_i (e_i-1)S_i+\sum_{j}r_jE_j,\]
we have $h_\Gamma\le k_\Gamma$ by comparing the coefficients before $\Gamma_t$ in the both sides of equality (\ref{pull Q}).
If $\Gamma=\pi(S_i)$, suppose that $\pi^*\Gamma=e_iS_i+\sum_{j}r_j'E_j$.
Similarly, we have $h_\Gamma e_i\le k_\Gamma e_i+(e_i-1)$ by comparing the coefficients before $\Gamma_t$ in the both sides of equality (\ref{pull Q}). So $h_\Gamma\le k_\Gamma$.
Therefore we have $K_Y=\text{div}(\omega)\ge Q$. So $h^0(\mathscr{O}_Y(K_Y))\ge h^0(\mathscr{O}_Y(Q))=p_g(X)$. On the other hand, $h^0(\mathscr{O}_Y(K_Y))\le h^0(\mathscr{O}_{\tilde{X}}(K_{\tilde{X}}))=p_g(X)$. So
\[p_g(X)=p_g(Y)=p_g(\Sigma)=h^0(\mathscr{O}_Y(Q)).\]
So $|K_Y|=|Q|+Z'$ and $\phi_{|K_Y|}=\phi_{|Q|}=\varepsilon$ is birational.
\end{proof}

\begin{lemma}\label{degimag}
Let $\Sigma\subseteq \mathbb{P}^n$ be a non-degenerated projective variety of dimension $3$. Take a resolution $\varepsilon: M\rightarrow \Sigma$. If $K_M$ is nef, then deg$\Sigma\ge 2n-4$.
\end{lemma}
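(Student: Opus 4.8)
The plan is to cut $\Sigma$ down to a curve by two general hyperplanes, transfer the nefness of $K_M$ into a lower bound for the genus of that curve, and then squeeze out the degree bound via Clifford's theorem.

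Set $L=\varepsilon^{*}\mathscr{O}_{\Sigma}(1)$, a base-point-free line bundle on $M$ with $L^{3}=\deg\Sigma=:d$. Choose general hyperplanes $H_{1},H_{2}\subset\mathbb{P}^{n}$ and, using Bertini, pick smooth members $D_{i}\sim\varepsilon^{*}H_{i}$ so that $C:=D_{1}\cap D_{2}$ is a smooth irreducible curve not contained in the exceptional locus of $\varepsilon$; then $\varepsilon$ restricts to a birational morphism $C\to C_{0}:=\Sigma\cap H_{1}\cap H_{2}$. Since a general codimension-two linear section of the non-degenerate threefold $\Sigma$ is an integral curve which is non-degenerate in $H_{1}\cap H_{2}\cong\mathbb{P}^{n-2}$, pulling back the $n-1$ coordinate forms shows $h^{0}(C,L|_{C})\ge n-1$, while $\deg(L|_{C})=L\cdot C=L^{3}=d$.

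The key step is the genus estimate. Applying adjunction twice to the complete intersection $C=D_{1}\cap D_{2}$ in the smooth threefold $M$ gives
\[
2g(C)-2=(K_{M}+D_{1}+D_{2})\cdot D_{1}\cdot D_{2}=K_{M}\cdot L^{2}+2L^{3}.
\]
The class $L^{2}$ is represented by the effective curve $C$, so $K_{M}\cdot L^{2}\ge 0$ because $K_{M}$ is nef; hence $g(C)\ge d+1$.

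Now the inequality falls out. The line bundle $L|_{C}$ is effective (it has at least $n-1\ge 1$ sections) and of degree $d\le g(C)-1$; were it non-special, Riemann--Roch would force $h^{0}(C,L|_{C})=d+1-g(C)\le 0$, which is absurd, so $L|_{C}$ is special. Clifford's theorem then yields
\[
n-1\le h^{0}(C,L|_{C})\le \tfrac{1}{2}\deg(L|_{C})+1=\tfrac{d}{2}+1,
\]
that is, $d\ge 2n-4$. The only genuinely delicate point is the first step: one must verify, combining Bertini with the fact that $\varepsilon$ is birational but not finite, that the general curve section $C$ on $M$ is smooth, irreducible, avoids the exceptional locus, and maps birationally onto a non-degenerate curve in $\mathbb{P}^{n-2}$; once this is in place the genus bound and Clifford's estimate are immediate.
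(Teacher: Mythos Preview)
Your proof is correct and follows essentially the same route as the paper's: cut down to a curve in $|L|\cap|L|$, use adjunction together with the nefness of $K_M$ to bound its genus, and finish with Clifford's theorem. The only cosmetic differences are that the paper obtains $h^{0}(C,L|_{C})\ge n-1$ via the restriction exact sequences $0\to\mathscr{O}_M\to\mathscr{O}_M(S)\to\mathscr{O}_S(S)\to 0$ and $0\to\mathscr{O}_S\to\mathscr{O}_S(C)\to\mathscr{O}_C(C)\to 0$ rather than via non-degeneracy of the image curve, and it organizes the endgame as a case split on $h^{1}(L|_{C})$ (deriving a contradiction when $h^{1}=0$) instead of first proving $g(C)\ge d+1$ and then deducing specialness.
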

\begin{proof}
Let $\mathscr{O}_M(H)=\varepsilon^*(\mathscr{O}_{\Sigma}(1))$. So $|H|$ is base point free. Take an element $S\in |H|$ such that $S$ is irreducible and nonsingular surface. Then $S^3=\text{deg}\Sigma>0$. Notice that $|H|$ restrict to $S$, $|H|_S$, is also base point free, so we can take  $C\in |H|_S$ a nonsingular irreducible curve such that $\mathscr{O}_S(C):=\mathscr{O}_M(H)_S$. Then $C^2=S^3=\text{deg}\Sigma$.
Consider the exact sequence \[0\rightarrow\mathscr{O}_M\rightarrow\mathscr{O}_M(S)\rightarrow\mathscr{O}_S(S)\rightarrow 0.\]
Take cohomology of the exact sequence, we can get $$h^0(\mathscr{O}_S(C))=h^0(\mathscr{O}_S(S))\ge h^0(\mathscr{O}_M(S))-1\ge (n+1)-1=n.$$
From the exact sequence  \[0\rightarrow\mathscr{O}_S\rightarrow\mathscr{O}_S(C)\rightarrow\mathscr{O}_C(C)\rightarrow 0,\] we can also have $h^0(\mathscr{O}_S(C))\le h^0(\mathscr{O}_C(C))+1$.
If $h^1(\mathscr{O}_C(C))\neq 0$, then by Clifford theorem, we have \[n-1\le h^0(\mathscr{O}_C(C))\le \frac{1}{2}C^2+1.\] So deg$\Sigma\ge 2n-4$.
If $h^1(\mathscr{O}_C(C))=0$, by Riemann-Roch theorem, we have $h^0(\mathscr{O}_C(C))=C^2-g(C)+1$.  So deg$\Sigma\ge n-2+g(C)$. On the other hand,
\begin{equation}
\begin{split}
2g(C)-2&=C^2+CK_S\\
&=C^2+C(K_M+S)|_S\\
&=2C^2+K_MS^2\\
&\ge 2C^2.
 \end{split}
  \end{equation}
  So $g(C)\ge \text{deg}\Sigma+1$, a contradiction. Therefore deg$\Sigma\ge 2n-4$.
\end{proof}

With the notations as above, in \cite{D-G1}, Gao and the author showed that the canonical degree $d\le360$ and the equality holds if and only if $p_g(X)=4$, $q(X)=2$, $\chi(\omega_X)=5$, $K_X^3=360$ and $|K_X|$ is base point free,  which is the generalization of Beauville's result of case (1) (i). In \cite{Cai}, Cai showed that if $p_g(X)\ge 105412$, then $d\le 72$ which is the generalization of Beauville's result of case (1) (ii). Next, we are going to generalize Beaville's other results in Theorem \ref{B}.

Proof of \textbf{Theorem \ref{maint2}}:
\begin{proof}
By the condition, one has that $p_g(X)\ge 5$.

For the case (1), by  Lemma \ref{degimag} and Miyaoka-Yau inequality (\cite{Mi}), we have
\[d(2p_g(X)-6)\le d\cdot \text{deg}\Sigma \le K_X^3\le 72\chi(\omega_X).\]
If we can show $\chi(\omega_X)\le p_g(X)+1$, then
\begin{equation}\label{star}
d\le 36\frac{\chi(\omega_X)}{p_g(X)-3}\le 36\frac{p_g(X)+1}{p_g(X)-3}=36(1+\frac{4}{p_g(X)-3})\le108.
\end{equation}
If $q(X)\le2$, then $\chi(\omega_X)\le p_g(X)+q(X)-1\le p_g(X)+1$.

Now we can assume hereafter that $q(X)\ge 3$. Consider the Albanese map $alb_X$ of $X$ and the Stein factorization $f$ of $alb_X$ as follows:

\[\xymatrix{
X \ar[r]^{f}\ar[dr]_{alb_X}
& Y \ar[d]\\
& Alb(X)
}.\]

By Hacon's argument (see the proof of \cite{Ha}, Theorem 1.1), one has
\begin{enumerate}
\item [(1)] $\chi(\omega_X)\le p_g(X)$, if dim$Y\ge 2$;
\item [(2)] $\chi(\omega_X)\le p_g(X)+\chi(\omega_Y)$ and $\chi(\omega_Y)p_g(F)\le p_g(X)$, where $F$ is the general fiber of $f$, if dim$Y=1$.
\end{enumerate}
Hence if dim$Y\ge 2$, by (\ref{star}), the statement holds. More precisely,
\[d\le 36\frac{p_g(X)}{p_g(X)-3}\le 90.\]

We only need to consider dim$Y=1$. By the argument in the main theorem of \cite{D-G1}, we can get that $p_g(F)\ge$ dim$X=3$ and $p_g(X)\ge 6$.

Therefore
\[d\le 36\frac{\chi(\omega_X)}{p_g(X)-3}\le 36\frac{p_g(X)+\chi(\omega_Y)}{p_g(X)-3}\le 36(1+\frac{1}{p_g(F)})\frac{p_g(X)}{p_g(X)-3}\le96.\]

For the case (2), by  M. Chen's result (\cite{Ch2}) we have deg$\Sigma\ge 3p_g(X)-10$. So
\[d(3p_g(X)-10)\le K_X^3\le 72\chi(\omega_X).\]
Using the same arguments as above, we have, if $q(X)\le2$,
\begin{equation}
d\le 72\frac{p_g(X)+1}{3p_g(X)-10}\le86.
\end{equation}
If $q(X)\ge3$ and
\begin{itemize}
\item[(1)] if dim$Y\ge 2$, then $$d\le 72\frac{p_g(X)}{3p_g(X)-10}\le 72;$$\\
\item[(2)] if dim$Y\ge 1$, then $$d\le 72(1+\frac{1}{p_g(F)})\frac{p_g(X)}{3p_g(X)-10}\le72.$$
\end{itemize}

Moreover, suppose that $p_g(X)\ge 111$.

If $q(X)\le2$, then
\begin{equation}
d\le 72\frac{p_g(X)+1}{3p_g(X)-10}\le24.
\end{equation}

If $q(X)\ge3$ and
\begin{itemize}
\item[(1)] if dim$Y\ge 2$, then $$d\le 72\frac{p_g(X)}{3p_g(X)-10}\le 24;$$\\
\item[(2)] if dim$Y\ge 1$, then $$d\le 72(1+\frac{1}{p_g(F)})\frac{p_g(X)}{3p_g(X)-10}\le32.$$
\end{itemize}
So Theorem \ref{maint2} has been proved.
\end{proof}

\section*{Acknowledgements}
The author would like to thank N. Mok for supporting his researches when he was in the University of Hong Kong, to P. Supino for pointing out their several interested papers and to Ching-Jui Lai for spotting some errors in the earlier drafts. The author also would like to thank S.-L. Tan for his unpublished suggestive and elegant book on algebraic surface.


\end{document}